 \newtheorem{theorem}{Theorem}
 \newtheorem{lemma}{Lemma}
\newcommand{\C}{\mathbb{C}}
\newcommand{\N}{\mathbb{N}}
\newcommand{\R}{\mathbb{R}}
\newcommand{\cB}{\mathcal{B}}
\newcommand{\cF}{\mathcal{F}}
\newcommand{\cM}{\mathcal{M}}
\newcommand{\cS}{\mathcal{S}}
\newcommand{\eps}{\varepsilon}
\newcommand{\fM}{\mathfrak{M}}
\begin{document}
\title[Fourier convolution operators]
{Fourier convolution operators\\ with symbols equivalent to zero
at infinity\\ on Banach function spaces}

\thanks{This work was partially supported by the Funda\c{c}\~ao para a
Ci\^encia e a Tecno\-lo\-gia (Portuguese Foundation for Science and Technology)
through the projects UID/ MAT/00297/2019 (Centro de Matem\'atica e
Aplica\c{c}\~oes).
The third author was also supported by the SEP-CONACYT Project A1-S-8793
(M\'exico).}

\author{Cl\'audio A. Fernandes, Alexei Yu. Karlovich, Yuri I. Karlovich}

\address{%
Cl\'audio A, Fernandes, Alexei Yu. Karlovich,
Centro de Matem\'atica e Aplica\c{c}\~oes,
Departamento de Matem\'atica,
Faculdade de Ci\^encias e Tecnologia,
Universidade Nova de Lisboa,
Quinta da Torre,
2829--516 Caparica,
Portugal}
\email{caf@fct.unl.pt}
\email{oyk@fct.unl.pt}

\address{Yuri I. Karlovich,
Centro de Investigaci\'on en Ciencias,
Instituto de Investigaci\'on en Ciencias B\'asicas y Aplicadas,
Universidad Aut\'onoma del Estado de Morelos,
Av. Universidad 1001, Col. Chamilpa,
C.P. 62209 Cuernavaca, Morelos, M\'exico}
\email{karlovich@uaem.mx}
\begin{abstract}
We study Fourier convolution operators $W^0(a)$ with symbols equivalent to
zero at infinity on a separable Banach function space $X(\R)$ such that the
Hardy-Littlewood maximal operator is bounded on $X(\R)$
and on its associate space $X'(\R)$. We show that the limit operators of
$W^0(a)$ are all equal to zero.
\end{abstract}
\keywords{Fourier convolution operator, Fourier multiplier, limit operator,
Banach function space, Hardy-Littlewood maximal operator,
equivalence at infinity.}
\maketitle
\section{Introduction}
The set of all Lebesgue measurable complex-valued functions on $\R$ is denoted
by $\fM(\R)$. Let $\fM^+(\R)$ be the subset of functions in $\fM(\R)$ whose
values lie  in $[0,\infty]$. The Lebesgue measure of a measurable set
$E\subset\R$ is denoted by $|E|$ and its characteristic function is denoted
by $\chi_E$. Following \cite[Chap.~1, Definition~1.1]{BS88}, a mapping
$\rho:\fM^+(\R)\to [0,\infty]$ is called a Banach function norm if,
for all functions $f,g, f_n \ (n\in\N)$ in $\fM^+(\R)$, for all
constants $a\ge 0$, and for all measurable subsets $E$ of $\R$, the following
properties hold:
\begin{eqnarray*}
{\rm (A1)} & & \rho(f)=0  \Leftrightarrow  f=0\ \mbox{a.e.}, \quad
\rho(af)=a\rho(f), \quad
\rho(f+g) \le \rho(f)+\rho(g),\\
{\rm (A2)} & &0\le g \le f \ \mbox{a.e.} \ \Rightarrow \ \rho(g)
\le \rho(f)
\quad\mbox{(the lattice property)},
\\
{\rm (A3)} & &0\le f_n \uparrow f \ \mbox{a.e.} \ \Rightarrow \
       \rho(f_n) \uparrow \rho(f)\quad\mbox{(the Fatou property)},\\
{\rm (A4)} & & |E|<\infty \Rightarrow \rho(\chi_E) <\infty,\\
{\rm (A5)} & & |E|<\infty \Rightarrow \int_E f(x)\,dx \le C_E\rho(f)
\end{eqnarray*}
with $C_E \in (0,\infty)$ which may depend on $E$ and $\rho$ but is
independent of $f$. When functions differing only on a set of measure zero
are identified, the set $X(\R)$ of all functions $f\in\fM(\R)$
for which $\rho(|f|)<\infty$ is called a Banach function space. For each
$f\in X(\R)$, the norm of $f$ is defined by
$\left\|f\right\|_{X(\R)} :=\rho(|f|)$.
Under the natural linear space operations and under this norm, the set
$X(\R)$ becomes a Banach space (see \cite[Chap.~1, Theorems~1.4 and~1.6]{BS88}).
If $\rho$ is a Banach function norm, its associate norm $\rho'$ is
defined on $\fM^+(\R)$ by
\[
\rho'(g):=\sup\left\{
\int_{\R} f(x)g(x)\,dx \ : \ f\in \fM^+(\R), \ \rho(f) \le 1
\right\}, \quad g\in \fM^+(\R).
\]
It is a Banach function norm itself \cite[Chap.~1, Theorem~2.2]{BS88}.
The Banach function space $X'(\R)$ determined by the Banach function norm
$\rho'$ is called the associate space (K\"othe dual) of $X(\R)$.
The associate space $X'(\R)$ is naturally identified with a subspace
of the (Banach) dual space $[X(\R)]^*$.

Let $\cF:L^2(\R)\to L^2(\R)$ denote the Fourier transform
\[
(\cF f)(x):=\widehat{f}(x):=\int_\R f(t)e^{itx}\,dt,
\quad
x\in\R,
\]
and let $\cF^{-1}:L^2(\R)\to L^2(\R)$ be the inverse of $\cF$.
It is well known that the Fourier convolution operator
\[
W^0(a):=\cF^{-1}a\cF
\]
is bounded on the space $L^2(\R)$ for every $a\in L^\infty(\R)$.
Let $X(\R)$ be a separable Banach function space. Then
by \cite[Lemma~2.12(a)]{KS14}, $L^2(\R)\cap X(\R)$ is dense in $X(\R)$.
A function $a\in L^\infty(\R)$ is called a Fourier multiplier on $X(\R)$ if the
convolution operator $W^0(a)$ maps $L^2(\R)\cap X(\R)$ into
$X(\R)$ and extends to a bounded linear operator on $X(\R)$. The function
$a$ is called the symbol of the Fourier convolution operator $W^0(a)$.
The set $\cM_{X(\R)}$ of all Fourier multipliers on  $X(\R)$ is a unital
normed algebra under pointwise operations and the norm
\[
\left\|a\right\|_{\cM_{X(\R)}}:=\left\|W^0(a)\right\|_{\cB(X(\R))},
\]
where $\cB(X(\R))$ denotes the Banach algebra of all bounded linear operators
on the space $X(\R)$.

Recall that the (non-centered) Hardy-Littlewood maximal operator $\cM$ of a
function $f\in L_{\rm loc}^1(\R)$ is defined by
\[
(\cM f)(x):=\sup_{J\ni x}\frac{1}{|J|}\int_J|f(y)|\,dy,
\]
where the supremum is taken over all finite intervals $J\subset\R$
containing $x$.

Let $V(\R)$ be the Banach algebra of all functions $a:\R\to\C$ with finite
total variation
\[
V(a):=\sup\sum_{i=1}^n|a(t_i)-a(t_{i-1})|,
\]
where the supremum is taken over all partitions
$-\infty<t_0<\dots<t_n<+\infty$
of the real line $\R$ and the norm in $V(\R)$ is given by
$\|a\|_{V}:=\|a\|_{L^\infty(\R)}+V(a)$.
\begin{theorem}\label{th:Stechkin}
Let $X(\R)$ be a separable Banach function space such that the
Hardy-Littlewood maximal operator $\cM$ is bounded on $X(\R)$ and on its
associate space $X'(\R)$. If $a\in V(\R)$, then the convolution operator
$W^0(a)$ is bounded on the space $X(\R)$ and
\begin{equation}\label{eq:Stechkin}
\|W^0(a)\|_{\cB(X(\R))}
\le
c_{X}\|a\|_V
\end{equation}
where $c_{X}$ is a positive constant depending only on $X(\R)$.
\end{theorem}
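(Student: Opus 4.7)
The plan is to reduce the theorem to the boundedness of the Hilbert transform $H$ on $X(\R)$ together with a Stieltjes-type representation of functions of bounded variation. Under the hypotheses that $\cM$ is bounded on both $X(\R)$ and $X'(\R)$, the operator $H$ is bounded on $X(\R)$ by a classical result on singular integrals in Banach function spaces, which I would invoke as a cited fact.

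After replacing $a$ by a right-continuous representative (an alteration on a countable set that leaves $W^0(a)$ unchanged), use the Lebesgue--Stieltjes representation
\[
a(\xi) = a(-\infty) + \int_\R \chi_{[t,\infty)}(\xi)\, d\mu_a(t), \quad \xi \in \R,
\]
where $\mu_a$ is a complex Borel measure with $|\mu_a|(\R) = V(a)$. Since $\chi_{[0,\infty)} = \tfrac{1}{2}(1 + \operatorname{sign})$, the Fourier multiplier identity yields $W^0(\chi_{[0,\infty)}) = \tfrac{1}{2}(I \pm iH)$ (the sign depends on the Fourier-transform convention of the paper), which is bounded on $X(\R)$. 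The shifted symbol $\chi_{[t,\infty)}(\xi) = \chi_{[0,\infty)}(\xi - t)$ is handled by the intertwining relation $W^0(a(\cdot - t)) = M_{-t}W^0(a)M_t$, where $M_t f(x) := e^{itx}f(x)$ is an isometry of $X(\R)$ (since $|e^{itx}|=1$). Consequently there is a constant $C_X$, depending only on $X(\R)$, such that
\[
\|W^0(\chi_{[t,\infty)})\|_{\cB(X(\R))} \le \tfrac{1}{2}\bigl(1 + \|H\|_{\cB(X(\R))}\bigr) =: C_X
\]
uniformly in $t \in \R$.

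To finish, assemble the pieces by means of the operator-valued identity
\[
W^0(a) = a(-\infty) I + \int_\R W^0(\chi_{[t,\infty)}) \, d\mu_a(t),
\]
which I would verify in the strong sense on the dense subspace $L^2(\R)\cap X(\R)$: for $f \in L^2(\R)\cap X(\R)$, Fubini's theorem applied on the frequency side turns $W^0(a)f$ pointwise into the corresponding Stieltjes integral of $W^0(\chi_{[t,\infty)})f$. Taking $X(\R)$-norms, moving the norm inside the integral against $|\mu_a|$, and using the uniform bound together with $|a(-\infty)| \le \|a\|_{L^\infty(\R)}$ gives
\[
\|W^0(a)f\|_{X(\R)} \le \bigl(|a(-\infty)| + C_X V(a)\bigr)\|f\|_{X(\R)} \le \max\{1,C_X\}\,\|a\|_V\,\|f\|_{X(\R)}.
\]
The principal obstacle is the invoked Hilbert-transform boundedness on $X(\R)$; the rest is bookkeeping around the Stieltjes representation and the Fubini interchange needed to justify the operator-valued integral.
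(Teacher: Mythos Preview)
The paper does not actually prove this theorem; it simply records that the statement follows from \cite[Theorem~4.3]{K15a}. Your proposal supplies instead the classical direct argument: represent the $V(\R)$ symbol as a superposition of half-line indicators via its Lebesgue--Stieltjes measure, identify $W^0(\chi_{[t,\infty)})$ with a modulation-conjugate of the Riesz projection $\tfrac12(I\pm iH)$, and invoke the boundedness of the Hilbert transform $H$ on $X(\R)$, which is indeed known under the hypothesis that $\cM$ is bounded on both $X(\R)$ and $X'(\R)$. The outline is correct and even yields the explicit constant $c_X=\max\{1,\tfrac12(1+\|H\|_{\cB(X(\R))})\}$. Two small technical refinements are worth noting: the Fubini interchange on the frequency side is cleanest on a subspace where $\widehat f\in L^1(\R)$, e.g.\ $C_0^\infty(\R)$ or $\cS(\R)$, rather than on all of $L^2(\R)\cap X(\R)$, since you use pointwise Fourier inversion; and moving the $X(\R)$-norm inside the $d\mu_a$-integral is justified via the Lorentz--Luxemburg duality $\|g\|_{X(\R)}=\sup\{|\int_\R gh\,|:\|h\|_{X'(\R)}\le1\}$ together with scalar Fubini and the H\"older inequality for Banach function spaces. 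Compared with the paper's bare citation, your route is self-contained and elementary; the referenced result in \cite{K15a} lives in a broader framework of maximally modulated singular integrals, which is more machinery than the present statement requires.
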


This result follows from \cite[Theorem~4.3]{K15a}. Inequality \eqref{eq:Stechkin}
is usually called the Stechkin type inequality
(see also \cite[inequality (2.4)]{KILH12}).

Following \cite[p.~140]{DKILH14}, two Fourier multipliers $c,d\in \cM_{X(\R)}$
are called equivalent at infinity if
\[
\lim_{N\to\infty}\left\|\chi_{\R\setminus[-N,N]}(c-d)\right\|_{\cM_{X(\R)}}=0.
\]
In the latter case we will write $c\stackrel{\cM_{X(\R)}}{\sim} d$.

The aim of this paper is to start the study of Fourier convolution operators
with symbols equivalent at infinity to well behaved symbols by the method of
limit operators in the context of Banach function spaces. We refer to
\cite{RRS04} for a general theory of limit operators and to
\cite{KILH12,KILH13a,KILH13b} for its applications to the study of Fourier
convolution operators with piecewise slowly oscillating symbols on Lebesgue
spaces with Muckenhoupt weights, constituting a remarkable example of Banach
function spaces.

For a sequence of operators $\{A_n\}_{n\in\N}\subset\cB(X(\R))$, let
$\operatornamewithlimits{s-\lim}\limits_{n\to\infty}A_n$
denote the strong limit of the sequence, if it exists.
For $\lambda,x\in\R$, consider the function
$e_\lambda(x):=e^{i\lambda x}$.
Let $T\in\cB(X(\R))$ and $h=\{h_n\}_{n\in\N}\subset(0,\infty)$ be a sequence
satisfying $h_n\to +\infty$ as $n\to\infty$. The strong limit
\[
T_{h}:=\operatornamewithlimits{s-\lim}_{n\to\infty}
e_{h_n}Te_{h_n}^{-1}I
\]
is called the limit operator of $T$ related to the sequence
$h=\{h_n\}_{n\in\N}$, if it exists.

\begin{theorem}[Main result]
\label{th:LO-convolution-equivalent-0}
Let $X(\R)$ be a separable Banach function space such that the Hardy-Littlewood
maximal operator $\cM$ is bounded on the space $X(\R)$ and on its associate
space $X'(\R)$. If $a\in\cM_{X(\R)}$ is such that
$a\stackrel{\cM_{X(\R)}}{\sim}0$, then for every sequence
$h=\{h_n\}_{n\in\N}\subset(0,\infty)$, satisfying $h_n\to+\infty$ as
$n\to\infty$, the limit operator of $W^0(a)$ related to the sequence
$h$ is the zero operator.
\end{theorem}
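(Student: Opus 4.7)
My plan is to first rewrite the operator inside the strong limit in a more tractable form. A direct computation with the Fourier transform convention of the paper shows, for $f\in L^2(\R)\cap X(\R)$,
\[
e_h W^0(a)e_h^{-1}f \;=\; W^0\bigl(a(\cdot+h)\bigr)f;
\]
since $L^2(\R)\cap X(\R)$ is dense in $X(\R)$ and both sides extend to bounded operators, the identity persists on $X(\R)$. Because $|e_h|\equiv 1$, multiplication by $e_h$ is an isometric isomorphism of $X(\R)$, so the identity yields the translation invariance $\|W^0(a(\cdot+h))\|_{\cB(X(\R))}=\|W^0(a)\|_{\cB(X(\R))}$. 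Proving the theorem thus reduces to showing that $W^0(a(\cdot+h_n))f\to 0$ in $X(\R)$ for every $f\in X(\R)$.

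\textbf{Splitting the symbol.} Given $\eps>0$, the equivalence $a\stackrel{\cM_{X(\R)}}{\sim}0$ furnishes $N>0$ with $\|r_N\|_{\cM_{X(\R)}}<\eps$, where $r_N:=\chi_{\R\setminus[-N,N]}\,a$. Since $V(\chi_{[-N,N]})=2$, Theorem~\ref{th:Stechkin} places $\chi_{[-N,N]}$ in $\cM_{X(\R)}$, so both $r_N$ and $a_N:=\chi_{[-N,N]}\,a$ are Fourier multipliers on $X(\R)$. Decomposing $a(\cdot+h_n)=a_N(\cdot+h_n)+r_N(\cdot+h_n)$ and invoking translation invariance, the $r_N$-summand contributes at most $\eps\|f\|_{X(\R)}$ uniformly in $n$.

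\textbf{Vanishing of the truncated piece.} For the $a_N$-summand I would work with the subset $D\subset X(\R)$ of functions $f\in L^2(\R)\cap X(\R)$ whose Fourier transform has compact support, say in $[-M,M]$. Because $a_N(\cdot+h_n)$ is supported in $[-N-h_n,N-h_n]\subset(-\infty,-M)$ as soon as $h_n>N+M$, the product $a_N(\cdot+h_n)\widehat f$ vanishes identically and so $W^0(a_N(\cdot+h_n))f=0$ for all sufficiently large $n$. Combined with the uniform bound $\|W^0(a_N(\cdot+h_n))\|_{\cB(X(\R))}=\|a_N\|_{\cM_{X(\R)}}$ and density of $D$ in $X(\R)$, a standard three-$\eps$ approximation propagates this pointwise vanishing to every $f\in X(\R)$. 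Together with the previous paragraph this yields $W^0(a(\cdot+h_n))f\to 0$ in $X(\R)$, as required.

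\textbf{Main obstacle.} The technically delicate step is the density of the band-limited subset $D$ in $X(\R)$. I would establish it by convolving $f\in X(\R)$ with a rescaled Schwartz-class approximate identity $K_\delta$ chosen so that $\widehat{K_\delta}\in C_c^\infty(\R)$; then $\widehat{K_\delta\ast f}=\widehat{K_\delta}\,\widehat f$ is automatically compactly supported, so $K_\delta\ast f\in D$. The $X(\R)$-norm convergence $K_\delta\ast f\to f$ as $\delta\to 0^+$ is precisely the point at which the hypothesis that $\cM$ is bounded on $X(\R)$ (and on $X'(\R)$) is genuinely used: it supplies the pointwise majorant $\sup_{\delta>0}|K_\delta\ast f|\lesssim\cM f\in X(\R)$, which together with the absolute continuity of the norm in the separable space $X(\R)$ delivers the desired dominated convergence.
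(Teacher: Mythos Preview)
Your argument is correct and follows the same strategy as the paper: reduce to translated symbols, use that band-limited functions are dense in $X(\R)$ (established via mollification with a band-limited Schwartz kernel, the pointwise bound by $\cM f$, and dominated convergence using absolute continuity of the norm), and exploit $a\stackrel{\cM_{X(\R)}}{\sim}0$ together with Stechkin's inequality for characteristic functions of intervals. The paper organizes the final estimate slightly differently---rather than splitting $a=a_N+r_N$, it writes $W^0(a(\cdot+h_n))f=W^0(a\chi_{K+h_n})f$ for $\operatorname{supp}\widehat f\subset K$, notes that $K+h_n\subset\R\setminus[-N,N]$ for large $n$, and bounds $\|a\chi_{K+h_n}\|_{\cM_{X(\R)}}\le\|\chi_{\R\setminus[-N,N]}a\|_{\cM_{X(\R)}}\|\chi_{K+h_n}\|_{\cM_{X(\R)}}$---but this is the same mechanism.

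One small technical point in your density sketch: for a general $f\in X(\R)$ the convolution $K_\delta*f$ lies in $X(\R)$ and has compactly supported Fourier transform (as a tempered distribution), but you have not checked that it lies in $L^2(\R)$, which your set $D$ requires in order to make the computation $W^0(a_N(\cdot+h_n))f=\cF^{-1}\bigl(a_N(\cdot+h_n)\widehat f\,\bigr)$ meaningful. The paper sidesteps this by first approximating $f$ by some $g\in C_0^\infty(\R)$ (dense in the separable space $X(\R)$) and only then mollifying: since $g\in C_0^\infty\subset\cS$, one gets $g*K_\delta\in\cS_0(\R)\subset L^2(\R)\cap X(\R)$ automatically.
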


As usual, let $C_0^\infty(\R)$ denote the set of all infinitely differentiable
functions with compact support and let $\cS(\R)$ be the Schwartz space of
rapidly decreasing smooth functions. Finally, denote by $\cS_0(\R)$ the set of
all functions $f\in\cS(\R)$ such that their Fourier transforms $\cF f$ have
compact supports.

The paper is organized as follows. In Section~\ref{sec:mollification},
we discuss approximation by mollifiers in separable Banach function spaces
such that $\cM$ is bounded on $X(\R)$. In Section~\ref{sec:density},
we show that under the assumptions of the previous section,
the set $\cS_0(\R)$ is dense in the space $X(\R)$. Finally, in
Section~\ref{sec:proof}, we prove Theorem~\ref{th:LO-convolution-equivalent-0},
essentially using the density of $\cS_0(\R)$ in the space $X(\R)$.
\section{Mollification in separable Banach function spaces}
\label{sec:mollification}

The following auxiliary statement might be of independent interest.
\begin{theorem}\label{th:mollification}
Let $\varphi\in L^1(\R)$ satisfy $\int_\R\varphi(x)\,dx=1$
and
\begin{equation}\label{eq:mollification-1}
\varphi_\delta(x):=\delta^{-1}\varphi(x/\delta),\quad x\in\R,\quad\delta>0.
\end{equation}
Suppose that the radial majorant of $\varphi$ given by
$\Phi(x):=\sup\limits_{|y|\ge|x|}|\varphi(y)|$ belongs to $L^1(\R)$.
If $X(\R)$ is a Banach function space such that the Hardy-Littlewood
maximal operator $\cM$ is bounded on the space $X(\R)$, then
for all $f\in X(\R)$,
\begin{equation}\label{eq:mollification-2}
\sup_{\delta>0}\|f*\varphi_\delta\|_{X(\R)}
\le 
L\|f\|_{X(\R)},
\end{equation}
where $L:=\|\Phi\|_{L^1(\R)}\|\cM\|_{\cB(X(\R))}$
and $\|\cM\|_{\cB(X(\R))}$ denotes the norm of the sublinear operator
$\cM$ on the space $X(\R)$. If, in addition, the space $X(\R)$ is 
separable, then for all $f\in X(\R)$,
\begin{equation}\label{eq:mollification-2*}
\lim_{\delta\to 0^+}
\|f*\varphi_\delta-f\|_{X(\R)}=0.
\end{equation}
\end{theorem}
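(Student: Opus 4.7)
The plan is to derive both estimates from the classical pointwise majorisation of $f*\varphi_\delta$ by the Hardy--Littlewood maximal function $\cM f$, and then to promote uniform convergence on a dense subspace to norm convergence on all of $X(\R)$ via a standard three-$\varepsilon$ argument.

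For \eqref{eq:mollification-2}, the first step is the classical uniform pointwise inequality
\[
|(f*\varphi_\delta)(x)|\le \|\Phi\|_{L^1(\R)}\,(\cM f)(x)
\quad\text{for a.e. }x\in\R\text{ and all }\delta>0.
\]
This is proved by noting that the radial majorant of $\varphi_\delta$ is $\delta^{-1}\Phi(\cdot/\delta)$, whose $L^1(\R)$-norm equals $\|\Phi\|_{L^1(\R)}$, and then decomposing this radially decreasing integrable majorant via a layer-cake into normalised indicators of symmetric intervals; each such piece, convolved with $|f|$, is dominated by $\cM f(x)$ (see, e.g., the standard argument in Stein, \emph{Singular Integrals}, Chapter~III). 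The lattice property (A2) together with the assumed boundedness of $\cM$ on $X(\R)$ then yields \eqref{eq:mollification-2} at once with the stated constant $L$.

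For \eqref{eq:mollification-2*}, I would first identify a convenient dense subspace of $X(\R)$. Since $X(\R)$ is separable, its norm is absolutely continuous; combining this with Lebesgue-measure regularity and Urysohn's lemma allows one to approximate any $\chi_E$ with $|E|<\infty$ by continuous compactly supported functions in $X(\R)$-norm, and hence $C_c(\R)\subset X(\R)$ is dense. For $g\in C_c(\R)$ the convolution $g*\varphi_\delta$ converges to $g$ uniformly on $\R$ as $\delta\to 0^+$ by the standard approximate-identity argument, while the pointwise bound above provides the $\delta$-independent dominating function
\[
|(g*\varphi_\delta - g)(x)|\le \|\Phi\|_{L^1(\R)}(\cM g)(x)+|g(x)|,
\]
which lies in $X(\R)$ because $\cM$ is bounded on $X(\R)$. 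The dominated convergence theorem for Banach function spaces with absolutely continuous norm (Bennett--Sharpley, Chapter~1) then gives $\|g*\varphi_\delta-g\|_{X(\R)}\to 0$.

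Finally, for arbitrary $f\in X(\R)$ and $\varepsilon>0$, choose $g\in C_c(\R)$ with $\|f-g\|_{X(\R)}<\varepsilon$. Applying \eqref{eq:mollification-2} to $f-g$ and using the triangle inequality,
\[
\|f*\varphi_\delta - f\|_{X(\R)}
\le L\|f-g\|_{X(\R)}+\|g*\varphi_\delta - g\|_{X(\R)}+\|g-f\|_{X(\R)},
\]
so $\limsup_{\delta\to 0^+}\|f*\varphi_\delta-f\|_{X(\R)}\le(L+1)\varepsilon$, and letting $\varepsilon\to 0$ gives \eqref{eq:mollification-2*}. The main obstacle I anticipate is the density of $C_c(\R)$ in the separable Banach function space $X(\R)$: this hinges on the equivalence between separability and absolute continuity of the norm in a Banach function space, plus a regularity/Urysohn reduction to approximating characteristic functions of sets of finite measure. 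All other ingredients — the pointwise maximal bound and the three-$\varepsilon$ argument — are classical once this density is in hand.
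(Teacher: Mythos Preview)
Your proposal is correct and follows essentially the same approach as the paper: the pointwise maximal inequality $\sup_{\delta>0}|(f*\varphi_\delta)(x)|\le\|\Phi\|_{L^1(\R)}(\cM f)(x)$ combined with the lattice property yields \eqref{eq:mollification-2}, and then a density/three-$\varepsilon$ argument together with dominated convergence in a space with absolutely continuous norm gives \eqref{eq:mollification-2*}. The only cosmetic differences are that the paper uses $C_0^\infty(\R)$ rather than $C_c(\R)$ as the dense subspace (citing an existing lemma instead of sketching the Urysohn/regularity reduction you outline), and it invokes a.e.\ convergence of $g*\varphi_{\delta_n}$ from Stein rather than the uniform convergence you use; both variants work equally well.
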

\begin{proof}
The idea of the proof is borrowed from \cite[Theorem~2.4]{RS08}.
By the proof of \cite[Lemma~5.7]{CF13}, for every
$f\in L_{\rm loc}^1(\R)$,
\begin{equation}\label{eq:mollification-3}
\sup_{\delta>0}|(f*\varphi_\delta)(x)|\le \|\Phi\|_{L^1(\R)}(\cM f)(x),
\quad
x\in\R.
\end{equation}
Inequality \eqref{eq:mollification-2} follows from inequality
\eqref{eq:mollification-3}, the boundedness of the Hardy-Littlewood 
maximal operator $\cM$ on the space $X(\R)$ and Axiom (A2).

Now assume that  the space $X(\R)$ is separable. Then
by \cite[Lemma~2.12(a)]{KS14},
the set $C_0^\infty(\R)$ is dense in the space $X(\R)$. Take $f\in X(\R)$ and
fix $\eps>0$. Then there exists $g\in C_0^\infty(\R)$ such that
\begin{equation}\label{eq:mollification-4}
\|f-g\|_{X(\R)}<\frac{\eps}{2(L+1)}.
\end{equation}
Hence for all $\delta>0$,
\begin{equation}\label{eq:mollification-5}
\|f*\varphi_\delta-f\|_{X(\R)}
\le
\|(f-g)*\varphi_\delta-(f-g)\|_{X(\R)}
+
\|g*\varphi_\delta-g\|_{X(\R)}.
\end{equation}
Taking into account 
inequalities \eqref{eq:mollification-2} and \eqref{eq:mollification-4}, 
we obtain for all $\delta>0$,
\begin{align}
\|(f-g)*\varphi_\delta-(f-g)\|_{X(\R)}
&\le
\|(f-g)*\varphi_\delta\|_{X(\R)}+\|f-g\|_{X(\R)}
\nonumber\\
&\le
(L+1)\|f-g\|_{X(\R)}<\eps/2.
\label{eq:mollification-6}
\end{align}
Let $\{\delta_n\}$ be an arbitrary sequence of positive numbers such that
$\delta_n\to 0$ as $n\to\infty$. Since $g\in C_0^\infty(\R)$, it follows
from \cite[Chap.~III, Theorem~2(b)]{S70} that
\begin{equation}\label{eq:mollification-7}
\lim_{n\to\infty}(g*\varphi_{\delta_n})(x)=g(x)
\quad\mbox{for a.e.}\quad x\in\R.
\end{equation}
In view of \eqref{eq:mollification-3}, we have for all $n\in\N$,
\begin{equation}\label{eq:mollification-8}
|(g*\varphi_{\delta_n})(x)|\le \|\Phi\|_{L^1(\R)}(\cM g)(x),\quad x\in\R.
\end{equation}
Since $g\in C_0^\infty(\R)\subset X(\R)$ and the Hardy-Littlewood maximal
operator $\cM$ is bounded on the space $X(\R)$, we see that $\cM g\in X(\R)$.
Then $\cM g$ has absolutely continuous norm because the Banach
function space $X(\R)$ is separable (see
\cite[Chap.~1, Definition~3.1 and Corollary~5.6]{BS88}).
It follows from \eqref{eq:mollification-7}--\eqref{eq:mollification-8}
and the dominated convergence theorem for Banach
function spaces (see \cite[Chap.~1, Proposition~3.6]{BS88}) that
\[
\lim_{n\to\infty}\|g*\varphi_{\delta_n}-g\|_{X(\R)}=0.
\]
Since the sequence $\{\delta_n\}$ is arbitrary, this means that one can find
$\delta_0>0$ such that for all $\delta\in(0,\delta_0)$,
\begin{equation}\label{eq:mollification-9}
\|g*\varphi_\delta-g\|_{X(\R)}<\eps/2.
\end{equation}
Combining \eqref{eq:mollification-5}, \eqref{eq:mollification-6}, and
\eqref{eq:mollification-9}, we see that for all $\delta\in(0,\delta_0)$
one has
\[
\|f*\varphi_\delta-f\|_{X(\R)}<\eps,
\]
which immediately implies \eqref{eq:mollification-2*}.
\qed
\end{proof}
\section{Density of the set $\cS_0(\R)$}
\label{sec:density}
\begin{lemma}\label{le:Schwartz-in-BFS}
Let $X(\R)$ be a Banach function space such that the Hardy-Little\--wood
maximal operator $\cM$ is bounded on $X(\R)$. Then $\cS(\R)\subset X(\R)$.
\end{lemma}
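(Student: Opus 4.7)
The plan is to pointwise dominate an arbitrary Schwartz function by a multiple of $1/(1+|x|)$, and then to show that this envelope itself lies in $X(\R)$ by realizing it as a lower bound for $\cM\chi_{[-1,1]}$. The hypotheses then do all the work.

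First I would observe that axiom (A4) gives $\chi_{[-1,1]}\in X(\R)$, so the assumed boundedness of $\cM$ on $X(\R)$ yields $\cM\chi_{[-1,1]}\in X(\R)$. Next I would produce the pointwise lower bound
\[
(\cM\chi_{[-1,1]})(x)\ge\frac{1}{1+|x|},\qquad x\in\R,
\]
by testing the non-centered maximal function at the interval $J_x:=[x-|x|-1,\,x+|x|+1]$, which contains $x$, has length $2(1+|x|)$, and contains $[-1,1]$ (both when $x\ge 0$ and when $x<0$). Hence, by axiom (A2), the function $x\mapsto 1/(1+|x|)$ belongs to $X(\R)$.

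Finally, for any $f\in\cS(\R)$ there is a constant $C>0$ (depending on $f$) with $(1+|x|)^{2}|f(x)|\le C$ for all $x\in\R$; in particular $|f(x)|\le C/(1+|x|)$, and a last application of (A2) delivers $f\in X(\R)$.

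I expect no real obstacle: the lemma reduces to the elementary pointwise bound on $\cM\chi_{[-1,1]}$, which is the only nontrivial geometric step. Everything else is an immediate invocation of (A2), (A4), and the boundedness hypothesis on $\cM$.
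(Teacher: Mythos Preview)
Your argument is correct and follows essentially the same route as the paper: both show $f\in X(\R)$ by pointwise dominating $|f|$ via $\cM\chi_{[-1,1]}\in X(\R)$ together with Axioms~(A2) and~(A4). The only cosmetic difference is that the paper splits into $[-1,1]$ and its complement (using $\chi_{\R\setminus[-1,1]}(x)/|x|\le\cM\chi_{[-1,1]}(x)$ from \cite[Example~2.1.4]{G14}), whereas you prove the global bound $(\cM\chi_{[-1,1]})(x)\ge 1/(1+|x|)$ directly and thus avoid the split.
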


\begin{proof}
Suppose that $f\in\cS(\R)$. Then, in particular,
\[
\rho_0(f):=\sup_{x\in\R}|f(x)|<\infty,
\quad
\rho_1(f):=\sup_{x\in\R}|xf(x)|<\infty.
\]
By \cite[Example~2.1.4]{G14},
\begin{equation}\label{eq:Schwartz-in-BFS-1}
\frac{\chi_{\R\setminus[-1,1]}(x)}{|x|}
\le
\chi_{\R\setminus[-1,1]}(x)(\cM\chi_{[-1,1]})(x).
\end{equation}
Since the function $\chi_{[-1,1]}$ belongs to $X(\R)$
by Axiom (A4) and since the operator $\cM$
is bounded on the space $X(\R)$, we have $\cM\chi_{[-1,1]}\in X(\R)$. Let
$\psi(x)=|x|$. Then in view of \eqref{eq:Schwartz-in-BFS-1}
and Axiom (A2), we obtain
\begin{align*}
\|f\|_{X(\R)}
&\le
\left\|\chi_{[-1,1]}f\right\|_{X(\R)}
+
\left\|\chi_{\R\setminus[-1,1]}\psi f \cM\chi_{[-1,1]}\right\|_{X(\R)}
\\
&\le
\rho_0(f)\left\|\chi_{[-1,1]}\right\|_{X(\R)}
+
\rho_1(f)\left\|\cM\chi_{[-1,1]}\right\|_{X(\R)}.
\end{align*}
Thus, $f\in X(\R)$.
\qed
\end{proof}
\begin{theorem}\label{th:density-S0}
Let $X(\R)$ be a separable Banach function space such that the Hardy-Littlewood
maximal operator $\cM$ is bounded on $X(\R)$. Then the set $\cS_0(\R)$ is dense
in the space $X(\R)$.
\end{theorem}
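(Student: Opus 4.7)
The plan is to combine the density of $C_0^\infty(\R)$ in $X(\R)$, which is already available under our hypotheses via \cite[Lemma~2.12(a)]{KS14}, with a mollification by a cleverly chosen Schwartz kernel whose Fourier transform is compactly supported. The key point is that convolving a compactly supported smooth function against such a kernel produces a function in $\cS_0(\R)$.

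First, I would construct the kernel. Take any $\psi\in C_0^\infty(\R)$ with $\psi(0)=1$ and set $\varphi:=\cF^{-1}\psi$. Then $\varphi\in\cS(\R)$, and with the Fourier convention of the paper we have $\int_\R\varphi(x)\,dx=\widehat{\varphi}(0)=\psi(0)=1$, while $\widehat{\varphi}=\psi$ has compact support. Because $\varphi\in\cS(\R)$ satisfies $|\varphi(x)|\le C(1+|x|)^{-2}$, its radial majorant $\Phi(x)=\sup_{|y|\ge|x|}|\varphi(y)|$ lies in $L^1(\R)$, so Theorem~\ref{th:mollification} applies to this $\varphi$.

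Next, given $f\in X(\R)$ and $\eps>0$, use the density of $C_0^\infty(\R)$ in the separable space $X(\R)$ to pick $g\in C_0^\infty(\R)$ with $\|f-g\|_{X(\R)}<\eps/2$. By Theorem~\ref{th:mollification}, $\|g*\varphi_\delta-g\|_{X(\R)}\to 0$ as $\delta\to 0^+$, so one can fix $\delta>0$ for which $\|g*\varphi_\delta-g\|_{X(\R)}<\eps/2$, and the triangle inequality then gives $\|f-g*\varphi_\delta\|_{X(\R)}<\eps$.

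It remains to verify that $g*\varphi_\delta\in\cS_0(\R)$. Since both $g\in C_0^\infty(\R)\subset\cS(\R)$ and $\varphi_\delta\in\cS(\R)$, their convolution lies in $\cS(\R)$. Moreover, using $\widehat{f*h}=\widehat f\,\widehat h$ and $\widehat{\varphi_\delta}(x)=\widehat{\varphi}(\delta x)=\psi(\delta x)$, we see that $\widehat{g*\varphi_\delta}=\widehat g\cdot\widehat{\varphi_\delta}$ is compactly supported (in fact contained in $\delta^{-1}\operatorname{supp}\psi$). Hence $g*\varphi_\delta\in\cS_0(\R)$, completing the approximation. The only delicate step is choosing the mollifier so that it is simultaneously admissible for Theorem~\ref{th:mollification} (Schwartz decay, hence $L^1$ radial majorant) and produces band-limited outputs ($\widehat\varphi$ compactly supported); the rest is a routine density-plus-triangle-inequality argument.
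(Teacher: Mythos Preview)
Your proposal is correct and follows essentially the same approach as the paper's proof: approximate $f$ by $g\in C_0^\infty(\R)$, mollify $g$ against $\varphi_\delta$ where $\varphi=\cF^{-1}\psi$ for a compactly supported smooth $\psi$ with $\psi(0)=1$, invoke Theorem~\ref{th:mollification} for the $X(\R)$-convergence, and check via the convolution theorem that $g*\varphi_\delta\in\cS_0(\R)$. The only cosmetic difference is that the paper writes down an explicit bump $\varrho(x)=e^{1/(x^2-1)}\chi_{\{|x|<1\}}$ and normalizes $\cF^{-1}\varrho$ to have integral one, whereas you impose $\psi(0)=1$ directly; the paper also cites Lemma~\ref{le:Schwartz-in-BFS} to place $g*\varphi_\delta$ in $X(\R)$, which in your version is implicit in the use of Theorem~\ref{th:mollification}.
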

\begin{proof}
Let $f\in X(\R)$. Fix $\eps>0$. By \cite[Lemma~2.12(a)]{KS14}, there
exists a function $g\in C_0^\infty(\R)$ such that
\begin{equation}\label{eq:density-S0-1}
\|f-g\|_{X(\R)}<\eps/2.
\end{equation}
Let
\[
{
\varrho(x):=\left\{\begin{array}{lll}
e^{1/(x^2-1)} &\mbox{if}& |x|<1,
\\
0 &\mbox{if}& |x|\ge 1,
\end{array}\right.
\quad
\varphi(x):=\frac{(\cF^{-1}\varrho)(x)}{\int_\R(\cF^{-1}\varrho)(y)\,dy},
\quad x\in\R.
}
\]
As $\varrho\in C_0^\infty(\R)\subset\cS(\R)$, it follows immediately from
\cite[Corollary~2.2.15]{G14} that $\varphi\in\cS_0(\R)$. For all $\delta>0$,
we define the family of functions $\varphi_\delta$ by
\eqref{eq:mollification-1}. Since $g\in C_0^\infty(\R)$ and
$\varphi_\delta\in\cS(\R)$, we infer from \cite[Proposition~2.2.11(12)]{G14}
that
\[
[\cF(g*\varphi_\delta)](x)
=
(\cF g)(x)(\cF\varphi_\delta)(x)
=
(\cF g)(x)(\cF\varphi)(\delta x),
\quad x\in\R.
\]
As $\cF\varphi$ has compact support, we conclude that
$\cF(g*\varphi_\delta)$ also has compact support. Thus
$g*\varphi_\delta\in\cS_0(\R)$ for every $\delta>0$. By
Lemma~\ref{le:Schwartz-in-BFS}, $g*\varphi_\delta\in X(\R)$.

By the definition of the Schwartz class
$\cS(\R)$, there are constants $C_n>0$ such that
\[
|\varphi(x)|\le C_n(1+|x|)^{-n},
\quad
x\in\R,
\quad
n\in\N\cup\{0\}.
\]
Then
\[
\Phi(x)=\sup_{|y|\ge|x|}|\varphi(y)|\le C_n\sup_{|y|\ge |x|}(1+|y|)^{-n}
=C_n(1+|x|)^{-n}
\]
for $x\in\R$ and $n\in\N\cup\{0\}$. This estimate implies that the radial
majorant $\Phi$ of the function $\varphi$ is integrable.

Since $\Phi\in L^1(\R)$, the space $X(\R)$ is separable, and the
Hardy-Littlewood maximal operator $\cM$ is bounded on $X(\R)$, it follows from
Theorem~\ref{th:mollification} that there is a $\delta>0$ such that
\begin{equation}\label{eq:density-S0-2}
\|g*\varphi_\delta-g\|_{X(\R)}<\eps/2.
\end{equation}
Combining \eqref{eq:density-S0-1} and \eqref{eq:density-S0-2}, we see that
for every $\eps>0$ there is a $\delta>0$ such that
$\|f-g*\varphi_\delta\|_{X(\R)}<\eps$. Since $g*\varphi_\delta\in\cS_0(\R)$,
the proof is completed.
\qed
\end{proof}
\section{Proof of Theorem~\ref{th:LO-convolution-equivalent-0}}
\label{sec:proof}
Fix a sequence $\{h_n\}_{n\in\N}\subset(0,\infty)$ such that $h_n\to +\infty$
as $n\to\infty$. For every function $f\in\cS_0(\R)$ there exists a segment
$K=[x_1,x_2]\subset\R$ such that $\operatorname{supp}\cF f\subset [x_1,x_2]$.
Therefore
\begin{align}
e_{h_n}W^0(a)e_{h_n}^{-1}f
&=
W^0[a(\cdot+h_n)]f=\cF^{-1}[a(\cdot+h_n)\chi_K]\cF f
\nonumber\\
&=
W^0(a\chi_{K+h_n})f,
\label{eq:LO-convolution-equivalent-0-2}
\end{align}
where $K+h_n=\{x+h_n:x\in K\}$.

Fix $\eps>0$. Without loss of generality we may assume that $f\ne 0$. As
$a\stackrel{\cM_{X(\R)}}{\sim}0$, there exists $N>0$ such that
\begin{equation}\label{eq:LO-convolution-equivalent-0-3}
\left\|\chi_{\R\setminus[-N,N]}a\right\|_{\cM_{X(\R)}}
<
\frac{\eps}{3c_X\|f\|_{X(\R)}},
\end{equation}
where $c_X>0$ is the constant from Stechkin's type inequality
\eqref{eq:Stechkin}. Since $h_n\to+\infty$
as $n\to\infty$, we conclude that there exists $n_0\in\N$ such that
for all $n>n_0$, one has
$K+h_n\subset(N,+\infty)\subset\R\setminus[-N,N]$.
Therefore, for $n>n_0$, we have
\begin{equation}\label{eq:LO-convolution-equivalent-0-4}
a\chi_{K+h_n}=\chi_{\R\setminus[-N,N]}a\chi_{K+h_n}.
\end{equation}
By Theorem~\ref{th:Stechkin}, for every $n>n_0$, we have
\begin{equation}\label{eq:LO-convolution-equivalent-0-5}
\left\|\chi_{K+h_n}\right\|_{\cM_{X(\R)}}
\le
c_X\left\|\chi_{K+h_n}\right\|_{V}= 3c_X.
\end{equation}
Combining \eqref{eq:LO-convolution-equivalent-0-2}--%
\eqref{eq:LO-convolution-equivalent-0-5}, we see that for $n>n_0$,
\begin{align*}
\left\|e_{h_n}W^0(a)e_{h_n}^{-1}f\right\|_{X(\R)}
&\le
\left\|\chi_{R\setminus[-N,N]}a\chi_{K+h_n}\right\|_{\cM_{X(\R)}}\|f\|_{X(\R)}
\\
&\le
\left\|\chi_{R\setminus[-N,N]}a\right\|_{\cM_{X(\R)}}
\left\|\chi_{K+h_n}\right\|_{\cM_{X(\R)}}\|f\|_{X(\R)}
<\eps.
\end{align*}
Hence, for every $f\in\cS_0(\R)$,
\[
\lim_{n\to\infty}\left\|e_{h_n}W^0(a)e_{h_n}^{-1}f\right\|_{X(\R)}=0.
\]
Since $\cS_0(\R)$ is dense in $X(\R)$
(see Theorem~\ref{th:density-S0}), the latter equality immediately
implies that
\[
\operatornamewithlimits{s-\lim}_{n\to\infty}e_{h_n}W^0(a)e_{h_n}^{-1}I=0
\]
on the space $X(\R)$
in view of
\cite[Lemma~1.4.1(ii)]{RSS11}.
\qed

\end{document}